\documentclass[12pt]{amsart}
\usepackage{amssymb}

\newtheorem{thm}{Theorem}[section]

\theoremstyle{remark}

\theoremstyle{definition}

\RequirePackage{amsfonts}
\RequirePackage{bbm}

\begin{document}

\title
[Direct Proof of the Reflection Principle]
{A Direct Proof of the Reflection Principle for Brownian Motion}

\author{S. J. Dilworth}
\address{Department of Mathematics, University of South Carolina,
		Columbia, SC 29208 U.S.A.}
\email{dilworth@math.sc.edu}
\thanks{The first author was supported by The National Science Foundation under Grant Number DMS-1361461.}

\author{Duncan Wright}
\address{Department of Mathematics, University of South Carolina,
		Columbia, SC 29208 U.S.A.}
\email{dw7@math.sc.edu}

\subjclass[2000]{Primary: 60J65 ; Secondary: 60G40}
\keywords{Brownian Motion; Reflection Principle; Stopping Times}

\begin{abstract}
We present a self-contained proof of the reflection principle for Brownian Motion.
\end{abstract}

\maketitle

\section{Introduction}
\noindent
The reflection principle proved below is  one of the most  important properties of Brownian Motion. So much so that any treatment of Brownian Motion would be incomplete without mentioning it and some of its many applications
(see e.g.\  \cite{S}). Most notable among these applications, using the hitting time $\tau_x=\inf\{t:B_t=x\}$, is that
$$P(\tau_x\le t)= 2P(B_t\ge x),$$
which in turn yields that $X_t:=\max_{0\le s\le t}B_s$ and $|B_t|$ have the same distribution. This famous result
is attributed to Louis Bachelier \cite[p. 197]{B}, and also, in a later more rigorous treatment, to Paul L\'evy \cite[p. 293]{L}.  In fact it was  Bachelier who first  introduced  the stochastic process, which  later on became known as Brownian Motion,  as a model for stock  prices in his pioneering work in mathematical finance.  Remarkably, \cite{B} precedes the rigorous construction  of Brownian Motion by almost two decades.

The reflection principle is invariably presented as a consequence of the Strong Markov Property. This approach has pedagogical value as it provides  one of the first applications of the Strong Markov Property (see e.g. \cite{SP}). However, it has the drawback of being beyond the scope of less specialized texts and consequently the proof of the reflection principle is often omitted. We present here a short and direct proof requiring few prerequisites which is intended to make the reflection principle more accessible.

Recall that a \textit{Standard Brownian Motion} (SBM) on a probability space $(\Omega,\mathcal{F},P)$ is a gaussian process $(B_t)_{t \ge 0}$ (i.e., the finite-dimensional distributions are  multivariate normal distributions), 
with $B_0=0$, continuous sample paths, $\mathbb{E} [B_t]=0$, and covariance function $\mathbb{E}[B_s B_t] = \min(s,t)$. The $\sigma$-algebra $\mathcal{F}_t$ is the smallest $\sigma$-algebra containing all $P$-null sets for which each $B_s$ ($0 \le s \le t$) is measurable. 

A stopping time   with respect to the \textit{standard Brownian filtration} $(\mathcal{F}_t)_{t\ge0}$  is a mapping  $T \colon \Omega \rightarrow  [0,\infty]$ satisfying $\{T \le t\} \in \mathcal{F}_t$ for each $t \ge 0$. $T$ is allowed to take the value $\infty$ with positive probability.

A tool that is used in our proof of the reflection principle is  the `uniqueness theorem': the fact  that the distribution of an $\mathbb{R}^n$-valued  random vector $X$ is determined by its characteristic function $\phi_{X}(\lambda) := \mathbb{E}[\exp(i \mathbf{\lambda}\cdot X)]$ ($\lambda \in \mathbb{R}^n$) (see e.g.\ \cite[p.\ 135]{M}). The uniqueness theorem is  used in a similar way  to prove  the Strong Markov Property in \cite{SP}. 

Our proof also uses standard properties of the conditional expectation operator with respect to a sub-$\sigma$-algebra $\mathcal{G}$, namely linearity and the fact that 
$\mathbb{E}[XY|\mathcal{G}] = X\mathbb{E}[Y|\mathcal{G}]$ for random variables $X,Y$ when $X$ is $\mathcal{G}$-measurable (see e.g.\
\cite[p.\ 187]{M}).
The
`independence of Brownian increments' is used in the following intuitively obvious but slightly tricky to prove  form:   if $n \ge 1$ and $s < t_1<\dots <t_n$ and $f \colon \mathbb{R}^n \rightarrow \mathbb{R}$ is bounded and continuous, then, setting $V:=f(B_{t_1}-B_s,\dots, B_{t_n}-B_s)$,
\begin{equation}  \label{eq: fact}\mathbb{E}[V|\mathcal{F}_s] = \mathbb{E}[V]. \end{equation}
For completeness a short proof of this standard fact is given at the end.

\section{Reflection Principle}

\begin{thm} (Reflection Principle) Let $(B_t)_{t\ge 0}$ be an SBM and let $T$ be a stopping time with respect to  $(\mathcal F_t)_{t\ge 0}$. Define
	$$
	B_t^T:=\begin{cases} B_t, & 0\le t \le T \\ 2B_T-B_t, & t>T. \end{cases}
	$$
	Then $(B_t^T)_{t\ge 0}$ is an SBM.
\end{thm}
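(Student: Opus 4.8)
The plan is to show that $(B_t^T)_{t\ge 0}$ has continuous sample paths, satisfies $B_0^T=0$, and has the same finite-dimensional distributions as $(B_t)_{t\ge 0}$; since a continuous Gaussian process is determined by its finite-dimensional laws, this identifies $(B_t^T)_{t\ge 0}$ as an SBM. Continuity and $B_0^T=0$ are immediate from the definition by pasting at $t=T$, because the two branches agree there ($2B_T-B_T=B_T$), and measurability of $B_t^T$ follows by writing $B_t^T=B_t\mathbbm{1}_{\{T\ge t\}}+(2B_{T\wedge t}-B_t)\mathbbm{1}_{\{T<t\}}$ and noting $\{T<t\}\in\mathcal{F}_t$. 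The real work is to prove, for arbitrary $0\le t_1<\dots<t_n$, that $(B_{t_1}^T,\dots,B_{t_n}^T)\overset{d}{=}(B_{t_1},\dots,B_{t_n})$, which by the uniqueness theorem amounts to the equality of their characteristic functions.

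First I would handle the case of a \emph{discrete} stopping time $T$, taking values in a countable set $\{s_1,s_2,\dots\}\subseteq(0,\infty)$ together possibly with $\infty$. Fixing $\lambda\in\mathbb{R}^n$, I would split $\mathbb{E}[\exp(i\lambda\cdot(B_{t_1}^T,\dots,B_{t_n}^T))]$ over the events $\{T=s_j\}$ and $\{T=\infty\}$; on $\{T=\infty\}$ the two vectors agree, so nothing happens there. On $\{T=s_j\}$, writing $B_{t_k}=B_{s_j}+(B_{t_k}-B_{s_j})$ whenever $t_k>s_j$, the vector $(B_{t_1}^T,\dots,B_{t_n}^T)$ is obtained from $(B_{t_1},\dots,B_{t_n})$ by replacing the post-$s_j$ increment vector $W:=(B_{t_k}-B_{s_j})_{t_k>s_j}$ by $-W$, while the contribution of the coordinates with $t_k\le s_j$ and of the term $\big(\sum_{t_k>s_j}\lambda_k\big)B_{s_j}$ is unchanged. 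That unchanged contribution, multiplied by $\mathbbm{1}_{\{T=s_j\}}$, is bounded and $\mathcal{F}_{s_j}$-measurable, so the rule $\mathbb{E}[XY|\mathcal{G}]=X\mathbb{E}[Y|\mathcal{G}]$ lets me pull it out; applying \eqref{eq: fact} to the bounded continuous real and imaginary parts of $\exp(\pm i\,\mu\cdot W)$ as functions of the increments after $s_j$ then reduces the $j$-th term to comparing $\mathbb{E}[\exp(-i\,\mu\cdot W)]$ with $\mathbb{E}[\exp(i\,\mu\cdot W)]$. These agree because $W$ is a centered Gaussian vector, so $W\overset{d}{=}-W$. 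Summing over $j$ gives equality of characteristic functions, hence the discrete case.

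For a general stopping time $T$, I would approximate from above by the dyadic stopping times $T_m:=(k+1)2^{-m}$ on $\{k2^{-m}\le T<(k+1)2^{-m}\}$ and $T_m:=\infty$ on $\{T=\infty\}$. Each $T_m$ is a discrete stopping time with $T_m\downarrow T$, so by the previous step $(B_t^{T_m})_{t\ge 0}$ is an SBM. For fixed $t_1<\dots<t_n$, continuity of the paths of $(B_t)$ gives $B_{t_k}^{T_m}\to B_{t_k}^T$ almost surely for each $k$: on $\{t_k<T\}$ and on $\{t_k=T\}$ (where $T_m\ge T$ forces $t_k\le T_m$) the values are eventually equal to $B_{t_k}$, while on $\{t_k>T\}$ one has $t_k>T_m$ for large $m$ and $2B_{T_m}-B_{t_k}\to 2B_T-B_{t_k}$. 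Hence $(B_{t_1}^{T_m},\dots,B_{t_n}^{T_m})\to(B_{t_1}^T,\dots,B_{t_n}^T)$ a.s., so their characteristic functions converge; but each equals the characteristic function of $(B_{t_1},\dots,B_{t_n})$, and therefore so does the limit. By the uniqueness theorem the finite-dimensional laws of $(B_t^T)$ and $(B_t)$ coincide, which completes the proof.

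The step I expect to be the main obstacle is the discrete-time conditioning argument: it is precisely where the Strong Markov Property is normally used, and doing without it requires careful bookkeeping of which factors are $\mathcal{F}_{s_j}$-measurable together with the observation that on $\{T=s_j\}$ the reflection affects only the increments strictly after the now-deterministic time $s_j$, so that \eqref{eq: fact} and the symmetry $W\overset{d}{=}-W$ apply. The discretization itself is forced by the fact that \eqref{eq: fact} is available only for deterministic times; the limiting argument afterwards is routine given path continuity and the uniqueness theorem.
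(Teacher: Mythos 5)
Your proof is correct and follows essentially the same route as the paper: reduction to equality of characteristic functions via the uniqueness theorem, the decomposition on each event $\{T=s_j\}$ into a bounded $\mathcal{F}_{s_j}$-measurable factor and a sign-flipped increment vector handled by \eqref{eq: fact} plus Gaussian symmetry, and dyadic approximation from above with path continuity and bounded convergence. The only cosmetic difference is that you treat countably-valued stopping times (allowing $T=\infty$) in the discrete step, whereas the paper first handles finitely many values and instead truncates $T$ at $2^j$ in the approximation.
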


\begin{proof}
 Note that $(B^T_t)$ clearly has continuous sample paths. By the uniqueness theorem, to complete the proof it is enough to show, for each $n\ge 1$ and $0< t_1<\cdots <t_n<\infty$, and constants $\lambda_j\in\mathbb{R}$ ($1\le j\le n$), that 
 $
 \mathbb E[e^{iX^T}]= \mathbb E[e^{iX}],
 $
 where $$X:=\sum_{j=1}^n \lambda_j B_{t_j}\quad \text{and}\quad X^T:=\sum_{j=1}^n \lambda_j B_{t_j}^T.$$  For notational convenience, set  $t_0 :=0$ and $t_{n+1} :=\infty$. First, suppose $T$ takes only finitely many values $0\le a_1<\cdots <a_m <\infty $. For each $1\le r\le m$, choose $k_r$ such that $t_{k_r}   \le a_r < t_{k_r+1}$ and let
 $$
 Y_r:=\sum_{j=1}^{k_r}\lambda_j B_{t_j} + \left( \sum_{j=k_r+1}^n \lambda_j\right) B_{a_r}\quad\text{and}\quad
 Z_r:=\sum_{j=k_r+1}^n \lambda_j (B_{t_j}-B_{a_r}).
 $$
 Note that $Y_r$ is $\mathcal F_{a_r}$-measurable,  $Z_r$ is independent of $\mathcal F_{a_r}$ by  \eqref{eq: fact}, and also that 
 $$
 X= \sum_{r=1}^m (Y_r+Z_r)\mathbbm 1_{\{T=a_r\}}\quad\text{and}\quad
 X^T = \sum_{r=1}^m (Y_r-Z_r)\mathbbm 1_{\{T=a_r\}}.
 $$
 Therefore
 \begin{align*}
 \mathbb E[e^{iX^T}]&= \sum_{r=1}^m \mathbb E[e^{i(Y_r-Z_r)}\mathbbm 1_{\{T=a_r\}}]\\
 &= \sum_{r=1}^m \mathbb E[e^{iY_r}\mathbbm 1_{\{T=a_r\}}\mathbb E[e^{-iZ_r} |\mathcal F_{a_r}]]
 \intertext{(since $e^{iY_r}\mathbbm 1_{\{T=a_r\}}$ is $\mathcal F_{a_r}$-measurable)}
 &= \sum_{r=1}^m \mathbb E[e^{iY_r}\mathbbm 1_{\{T=a_r\}}]\mathbb E[e^{-iZ_r}]
 \intertext{(by independence of $Z_r$ with respect to $\mathcal F_{a_r}$)}
 &= \sum_{r=1}^m \mathbb E[e^{iY_r}\mathbbm 1_{\{T=a_r\}}]\mathbb E[e^{iZ_r}]
 \intertext{(by symmetry of $Z_r$)}
 &=\sum_{r=1}^m \mathbb E[e^{i(Y_r+Z_r)}\mathbbm 1_{\{T=a_r\}}]
 = \mathbb E[e^{iX}]
 \end{align*} (by reversing the steps to get the first equality above).
 To extend  the result to a general stopping time $T$, we simply approximate  $T$ by stopping times $T_j$ which take only finitely many values. To make this precise, let $T_j(\omega) := 2^j$ if $T(\omega) > 2^j$ and
 $
 T_j(\omega):= k2^{-j}
 $
 if $(k-1)2^{-j}< T(\omega)\le k2^{-j}\le 2^j$. Then clearly $T_j\rightarrow T$ almost surely and, by continuity of the sample paths of $(B_t)$, $X^{T_j}\rightarrow X^T$ almost surely. Thus, by the bounded convergence theorem, 
 $$
 \mathbb E[e^{iX^T}]=\lim_{j\rightarrow \infty}\mathbb E[e^{iX^{T_j}}]=\mathbb E[e^{iX}].
 $$\end{proof} Finally, we prove \eqref{eq: fact}. By definition of the conditional expectation operator, we have to show that, for all $A\in\mathcal F_s$,\begin{equation} \label{eq:prove} \mathbb{E}[V \mathbbm{1}_A]  = \mathbb{E}[V] P(A).\end{equation}The collection $\mathcal{G}$ of all $A \in \mathcal{F}_s$ for which \eqref{eq:prove} holds is easily seen to be a \textit{monotone class} (i.e., $\mathcal{G}$ is closed under countable increasing unions and decreasing intersections) containing the $P$-null sets. Moreover, given $m \ge 1$ and $0< s_1 <\dots< s_m \le s$, $\mathcal{G}$ contains the $\sigma$-algebra $\sigma(B_{s_1},\dots, B_{s_m})$, the smallest $\sigma$-algebra for which each $B_{s_j}$ ($1 \le j \le m$) is measurable: this follows from independence of Brownian increments. The union over all of these $\sigma$-algebras as $m$ and $(s_j)_{j=1}^m$ vary is an algebra whose augmentation by the $P$-null sets  generates $\mathcal{F}_s$. The monotone class lemma  (see e.g.\ \cite[p.\ 4]{M}) now gives $\mathcal{G} = \mathcal{F}_s$.


\begin{thebibliography}{1}
	\bibitem{B} Louis Bachelier, {\em Th\'eorie math\'ematique du jeu}, Annales Scientifiques de l'\'Ecole Normale Sup\'erieure {\bf 18} (1901), 143--209.
	
	\bibitem{L} Paul L\'evy, {\em Sur certains processus stochastiques  homog\`enes}, Comp. Math. {\bf 7} (1940), 283--339.
	
	\bibitem{M} Paul Malliavin, {\em Integration and Probability}, Springer-Verlag, New York, 1995.
	
	\bibitem{SP} Ren\'e L. Schilling and Lothar Partzsch, {\em Brownian Motion An Introduction to Stochastic Processes}, De Gruyter, 2012.
	
	\bibitem{S} Michael Steele, {\em Stochastic Calculus and Financial Applications}, Springer-Verlag, New York, 2001.
\end{thebibliography}
\end{document}